\newtheorem{thm}{Theorem}[section]
\newtheorem{cor}[thm]{Corollary}
\newcommand{\cp}{\,\square\,}
\begin{document}

\title{A Lower Bound and Several Exact Results on the $d$-Lucky Number}

\author{
	Sandi Klav\v zar $^{a,b,c}$
	\and
		Indra Rajasingh $^{d}$
	\and
	D. Ahima Emilet $^{d}$ 
}

\date{\today}

\maketitle
\begin{center}
$^d$ Faculty of Mathematics and Physics, University of Ljubljana, Slovenia\\
{\tt sandi.klavzar@fmf.uni-lj.si}
\medskip

$^b$ Faculty of Natural Sciences and Mathematics, University of Maribor, Slovenia\\
\medskip

$^c$ Institute of Mathematics, Physics and Mechanics, Ljubljana, Slovenia\\
\medskip

$^d$ School of Advanced Sciences, Vellore Institute of Techonology, Chennai-600127, India\\
{\tt indra.rajasingh@vit.ac.in}
\end{center}

\begin{abstract}
If $\ell: V(G)\rightarrow {\mathbb N}$ is a vertex labeling of a graph $G = (V(G), E(G))$, then the $d$-lucky sum of a vertex $u\in V(G)$ is  $d_\ell(u) = d_G(u) + \sum_{v\in N(u)}\ell(v)$. The labeling $\ell$ is a $d$-lucky labeling if $d_\ell(u)\neq d_\ell(v)$ for every $uv\in E(G)$. The $d$-lucky number $\eta_{dl}(G)$ of $G$ is the least positive integer $k$ such that $G$ has a $d$-lucky labeling $V(G)\rightarrow [k]$. A general lower bound on the $d$-lucky number of a graph in terms of its clique number and related degree invariants is proved. The bound is sharp as demonstrated with an infinite family of corona graphs. The $d$-lucky number is also determined for the so-called $G_{n,m}$-web graphs and graphs obtained by attaching the same number of pendant vertices to the vertices of a generalized cocktail-party graph.  
\end{abstract}

\noindent {\bf Key words:} lucky labeling; $d$-lucky labeling; corona graphs; cocktail-party graphs

\medskip\noindent
{\bf AMS Subj.\ Class:} 05C78

\section{Introduction}
\label{sec:intro}

In the celebrated paper~\cite{ka2004}, Karo\'{n}ski, \L uczak, and Thomason asked whether the edges of any graph with no component $K_2$ can be assigned weights from $\{1,2,3\}$ so that adjacent vertices have different sums of incident edge weights, in other words, such that the resultant vertex weighting is a proper coloring. Although the paper mentions no ``conjecture", the question turned later into the 1-2-3 Conjecture. The progress on the conjecture until 2012 has been surveyed in~\cite{seamone-2012}, while for recent progress see~\cite{hornak-2018, kalkowski-2017} and references therein. 

The paper~\cite{ka2004} can also be seen as the seed for the investigation of other types of graph labelings in which integers are assigned to some elements of the graph (vertices, edges, or both of them), such that the labeling yields a proper vertex coloring. For instance, Czerwi\'{n}ski, Grytczuk, and \.{Z}elazny~\cite{cz2009} introduced the concept of the lucky labeling and proposed the conjecture $\eta(G)\leq \chi(G)$, where $\eta(G)$ is the lucky number of $G$ (and, of course, $\chi(G)$ is the chromatic number of $G$). For more information on the lucky labelings see~\cite{ah2012, ak2013}. Similar to the lucky number, Chartrand, Okamoto, and Zhang~\cite{Gary} introduced sigma colorings, where the value at a vertex is obtained as the sum of the weights in its neighborhood. Club scheduling problems and hospital planning are real life applications of sigma colorings, cf.~\cite{lagura-2015}. For additional related labelings we refer to~\cite{de2013}. We mention in passing that graph labelings have a variety of applications such as incorporating redundancy in disks, designing drilling machines, creating layouts for circuit boards, and configuring resistor networks, see~\cite{wang2014}. Finally, different graph labelings were and are still extensively investigated, we refer to the recent developments~\cite{baca-2018, lozano-2019}. 

In this paper we are interested in $d$-lucky labelings that were introduced by Miller et al.~\cite{ahi2015} as a variant of the lucky labelings as follows. Let $N(u)=\left\{v\in V(G):\ uv\in E(G)\right\}$ be the open neighborhood of a vertex $u$ in a graph $G$. If $\ell: V(G)\rightarrow {\mathbb N}$ is a vertex labeling, then the {\em $d$-lucky sum} of a vertex $u\in V(G)$ with respect to $\ell$ is 
$$d_\ell(u) = d_G(u) + \sum_{v\in N(u)}\ell(v)\,,$$ 
where $d_G(u)$ is the degree of $u$. The labeling $\ell$ is a {\em $d$-lucky labeling} if $d_\ell(u)\neq d_\ell(v)$ holds for every pair of adjacent vertices $u$ and $v$. The {\em $d$-lucky number} $\eta_{dl}(G)$ of $G$ is the least positive integer $k$ such that $G$ admits a $d$-lucky labeling $\ell: V(G)\rightarrow [k] = \{1,\ldots,k\}$. Lucky labelings are obtained from $d$-lucky labelings by omitting the additive term $d_G(u)$. A closely related concept of the adjacent vertex distinguishing colorings is defined analogously, except that one adds up the labels in the closed neighborhood of a vertex, see~\cite{axenovic-2016, dehghan-2017}.

In the next section we prove a general lower bound on the $d$-lucky number of a graph in terms of its clique number and related degree invariants. The bound is sharp as demonstrated with an infinite family of corona graphs. The latter result is in turn used in Section~\ref{sec:web-graphs} to determine the $d$-lucky number of the so called $G_{n,m}$-web graphs. We conclude the paper with the $d$-lucky number of graphs obtained by attaching the same number of pendant vertices to the vertices of a generalized cocktail-party graph.  

\section{A Lower bound on the $d$-lucky number}
\label{sec:lower}

In this section we give a lower bound on $\eta_{dl}(G)$ of a graph $G$ in terms of its clique number $\omega(G)$ (that is, the size of a largest complete subgraph) and demonstrate that the bound is sharp.  

To state the main result we need the following notation. If $Q$ is a clique of $G$, then let $\delta_G(Q)$ and $\Delta_G(Q)$ be the minimum and the maximum degree in $G$ among the vertices from $Q$, respectively. Let further ${\cal Q}(G)$ be the set of largest cliques of $G$. Then we have:  

\begin{thm}
\label{thm:lower}
If $G$ is a connected graph, then 
$$\eta_{dl}(G) \ge \max_{Q\in {\cal Q}(G)}\left\lceil \frac{2\delta_G(Q) - \Delta_G(Q)+1}{\Delta_G(Q) - \omega(G) + 2}\right\rceil\,.$$
\end{thm}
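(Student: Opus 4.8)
The plan is to exploit the single structural fact that distinguishes a clique: every two of its vertices are adjacent, so their $d$-lucky sums must be pairwise distinct. Fix a largest clique $Q\in\mathcal{Q}(G)$, write $Q=\{u_1,\dots,u_\omega\}$ with $\omega=\omega(G)$, and let $\ell\colon V(G)\to[k]$ be any $d$-lucky labeling with $k=\eta_{dl}(G)$. Since $u_1,\dots,u_\omega$ are mutually adjacent, the values $d_\ell(u_1),\dots,d_\ell(u_\omega)$ are $\omega$ distinct positive integers, whence the spread of these values satisfies
$$\max_i d_\ell(u_i)-\min_i d_\ell(u_i)\ \ge\ \omega-1\,.$$
The whole argument then consists in bounding this spread from above by a quantity linear in $k$ and solving the resulting inequality for $k$.

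The crucial observation that makes the two bounds compatible is that for each $u_i\in Q$ the neighborhood splits as $N(u_i)=(Q\setminus\{u_i\})\cup(N(u_i)\setminus Q)$, and the contribution of the first part depends on $u_i$ only through its own label. Writing $S=\sum_{j=1}^{\omega}\ell(u_j)$ and $T_i=\sum_{v\in N(u_i)\setminus Q}\ell(v)$, I would record the identity $d_\ell(u_i)=d_G(u_i)+\bigl(S-\ell(u_i)\bigr)+T_i$, in which the common term $S$ is independent of $i$ and will therefore cancel when the spread is formed. Here $u_i$ has exactly $d_G(u_i)-(\omega-1)$ neighbors outside $Q$, each carrying a label in $[k]$, so that $d_G(u_i)-\omega+1\le T_i\le(d_G(u_i)-\omega+1)k$.

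Next I would bound the two extremes. For an upper bound on every $d_\ell(u_i)$ I use $d_G(u_i)\le\Delta_G(Q)$, $\ell(u_i)\ge1$, and $T_i\le(\Delta_G(Q)-\omega+1)k$, giving $d_\ell(u_i)\le\Delta_G(Q)+S-1+(\Delta_G(Q)-\omega+1)k$. For a lower bound I use $\ell(u_i)\le k$ together with the \emph{count} lower bound $T_i\ge d_G(u_i)-\omega+1$; combining this with the leading degree term yields $2d_G(u_i)-\omega+1\ge2\delta_G(Q)-\omega+1$, so $d_\ell(u_i)\ge 2\delta_G(Q)-\omega+1+S-k$. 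Subtracting, the $S$ terms cancel and the spread is at most $\Delta_G(Q)-2\delta_G(Q)+\omega-2+(\Delta_G(Q)-\omega+2)k$. Imposing that this be at least $\omega-1$ and using that $\Delta_G(Q)-\omega+2\ge1>0$ (since a vertex of a clique of order $\omega$ has degree at least $\omega-1$) to divide safely, I obtain $k\ge\bigl(2\delta_G(Q)-\Delta_G(Q)+1\bigr)/\bigl(\Delta_G(Q)-\omega+2\bigr)$. As $k=\eta_{dl}(G)$ is an integer the bound may be rounded up, and maximizing over $Q\in\mathcal{Q}(G)$ gives the claim.

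I expect the only genuinely delicate point to be the lower bound on the minimum $d$-lucky sum: the naive estimate $T_i\ge0$ loses exactly the term $\delta_G(Q)-\omega+1$ and produces a strictly weaker numerator, so one must insist on the sharper $T_i\ge d_G(u_i)-\omega+1$ coming from the external-neighbor count. Everything else is routine algebra, the cancellation of $S$ being what allows the maximum and minimum to be estimated at possibly different vertices without loss.
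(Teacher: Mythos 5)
Your proposal is correct and takes essentially the same route as the paper's proof: both decompose each $d$-lucky sum over a largest clique $Q$ into the common clique-label sum (your $S$, the paper's $x$), the vertex's own label, and the external-neighbor contribution, then bound the maximum and minimum possible sums to force $(\Delta_G(Q)-\omega+2)k \ge 2\delta_G(Q)-\Delta_G(Q)+1$. Your "spread $\ge \omega-1$" formulation is just a rephrasing of the paper's count that the interval of attainable values must contain at least $\omega$ integers, and your insistence on the sharper bound $T_i \ge d_G(u_i)-\omega+1$ is exactly the estimate the paper uses for its minimum.
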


\begin{proof}
Let $\omega(G) = s$ and let $Q\in {\cal Q}(G)$, so that $|n(Q)| = s$. Let $\eta_{dl}(G) = k$ and let $\ell: V(G) \rightarrow [k]$ be a $d$-lucky labeling of $G$. Set 
$$x = \sum_{u\in V(Q)} \ell(u)\,.$$
If $u\in V(Q)$ and $\ell(u) = i\in [k]$, then  
$$(x-i) + (d_G(u)-(s-1)) + d_G(u) \le d_\ell(u) \le (x-i) + k(d_G(u)-(s-1)) + d_G(u)\,.$$ 
Since $\ell(u) \in [k]$, the largest possible value of $d_\ell(u)$ is $(x-1) + k(\Delta_G(Q)-s+1) + \Delta_G(Q)$, and the smallest possible value of $d_\ell(u)$ is $(x-k) + (\delta_G(Q)-s+1) + \delta_G(Q)$. Therefore, vertices from $Q$ receive at most 
$$N = [(x-1) + k(\Delta_G(Q)-s+1) + \Delta_G(Q)] - [(x-k) + (\delta_G(Q)-s+1) + \delta_G(Q)] + 1$$ 
distinct $d_\ell$-sum values. Since the vertices of $Q$ receive pairwise different labels, $N\ge s$ must hold. From this inequality a straightforward computation yields 
$$k(\Delta_G(Q)-s+2) \ge 2\delta_G(Q) - \Delta_G(Q)+1\,.$$ 
The assertion now follows because this inequality holds true for any clique from ${\cal Q}$ and since the $d$-lucky number is an integer. 
\end{proof}

Theorem~\ref{thm:lower} significantly simplifies for certain classes of graphs, an instance is presented in the next result. 

\begin{cor}
\label{cor:lower-regular}
If $G$ is a connected graph and every vertex $v$ in any largest clique of $G$ has $deg_{G}(v) = r$, then $$\eta_{dl}(G) \ge \left\lceil \frac{r + 1}{r - \omega(G) + 2}\right\rceil\,.$$
\end{cor}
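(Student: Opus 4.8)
The plan is to derive this as a direct specialization of Theorem~\ref{thm:lower}. The corollary's hypothesis states that every vertex in any largest clique has degree exactly $r$. In the notation of the theorem, this means that for every $Q \in {\cal Q}(G)$ we have $\delta_G(Q) = \Delta_G(Q) = r$, since the minimum and maximum degrees among the vertices of $Q$ coincide when all such vertices share the common degree $r$.

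Given this, I would substitute $\delta_G(Q) = \Delta_G(Q) = r$ directly into the bound from Theorem~\ref{thm:lower}. The numerator $2\delta_G(Q) - \Delta_G(Q) + 1$ becomes $2r - r + 1 = r + 1$, and the denominator $\Delta_G(Q) - \omega(G) + 2$ becomes $r - \omega(G) + 2$. Since these values are the same for every largest clique $Q$, the maximum over $Q \in {\cal Q}(G)$ is attained trivially and the outer maximum disappears, leaving exactly
$$\eta_{dl}(G) \ge \left\lceil \frac{r + 1}{r - \omega(G) + 2}\right\rceil\,,$$
which is the claimed inequality.

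There is essentially no obstacle here, since the corollary is a pure substitution into the already-proved theorem; the only point requiring a word of care is confirming that the denominator $r - \omega(G) + 2$ is positive so that the expression is well-defined and the ceiling behaves as expected. This follows because every vertex of a largest clique $Q$ of size $\omega(G)$ is adjacent to the other $\omega(G) - 1$ vertices of $Q$, so its degree $r$ satisfies $r \ge \omega(G) - 1$, whence $r - \omega(G) + 2 \ge 1 > 0$. With this observation in place the substitution is valid and the proof is complete in a single line.
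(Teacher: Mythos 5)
Your proposal is correct and coincides with the paper's treatment: the paper gives no separate proof of this corollary, regarding it as the immediate specialization of Theorem~\ref{thm:lower} obtained by setting $\delta_G(Q) = \Delta_G(Q) = r$ for every largest clique $Q$, exactly as you do. Your added remark that $r \ge \omega(G) - 1$ guarantees a positive denominator is a sensible (if unstated in the paper) sanity check.
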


To see that Corollary~\ref{cor:lower-regular} (and hence also Theorem~\ref{thm:lower}) is sharp, consider complete graphs $K_n$ which are $(n-1)$-regular with $\omega(K_n) = n$. Thus Corollary~\ref{cor:lower-regular} implies $\eta_{dl}(K_n) \ge n$. To find another (non-trivial) family of such graphs recall that the {\em corona} $G\circ H$ of graphs $G$ and $H$ is defined as follows. Let $V(G) = [n]$ and let $H$ be a graph. Then $G\circ H$ is obtained from the disjoint union of $G$ and $n$ disjoint copies $H_1, \ldots, H_n$ of the graph $H$, where the the vertex $i\in V(G)$ is connected with an edge to every vertex of $H_i$. Let further $\overline{G}$ denote the complement of $G$. Then we have: 

\begin{thm} 
\label{thm:corona}
If $n\ge 2$ and $r\ge 1$, then
$$\eta_{_{dl}}(K_n\circ \overline{K_r}) = \left\lceil \frac{n+r}{r+1}\right\rceil\,.$$
\end{thm}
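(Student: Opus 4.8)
The plan is to prove the two matching inequalities separately. Throughout, write $G=K_n\circ\overline{K_r}$, let $v_1,\dots,v_n$ be the vertices of the $K_n$ factor, and for each $i$ let $p_{i,1},\dots,p_{i,r}$ denote the pendant vertices attached to $v_i$ (recall that $\overline{K_r}$ is an independent set of $r$ vertices, each joined only to $v_i$). Then $d_G(v_i)=n-1+r$ for every $i$, each $p_{i,t}$ has degree $1$, and the largest clique is $V(K_n)$, so $\omega(G)=n$.

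For the lower bound I would feed the clique $Q=V(K_n)$ into Theorem~\ref{thm:lower}. Every vertex of $Q$ has the same degree $n-1+r$, so $\delta_G(Q)=\Delta_G(Q)=n-1+r$ and the bracketed expression collapses to $\lceil(n+r)/(r+1)\rceil$. Since the theorem maximizes over all largest cliques, this already yields $\eta_{dl}(G)\ge\lceil(n+r)/(r+1)\rceil=:k$. (For $n=2$ there are further largest cliques of the form $\{v_i,p_{i,t}\}$, but they only produce a weaker bound, so $V(K_n)$ remains the decisive choice.)

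The substance of the proof is the matching upper bound, namely a $d$-lucky labeling $\ell\colon V(G)\to[k]$. First I would compute the two kinds of $d$-lucky sums. Setting $S=\sum_j\ell(v_j)$ and $P_i=\sum_t\ell(p_{i,t})$, one gets $d_\ell(v_i)=(n-1+r)+(S-\ell(v_i))+P_i$ and $d_\ell(p_{i,t})=1+\ell(v_i)$. There are only two families of edges. For a clique edge $v_iv_j$ the difference of the two sums is $a_i-a_j$, where $a_i:=P_i-\ell(v_i)$; hence all clique edges are distinguished exactly when $a_1,\dots,a_n$ are pairwise distinct. For a pendant edge $v_ip_{i,t}$ I expect the clique vertex to dominate: from $S-\ell(v_i)\ge n-1$ and $P_i\ge r$ we get $d_\ell(v_i)\ge 2n+2r-2$, while $d_\ell(p_{i,t})\le 1+k$, and a short verification that $k\le 2n+2r-4$ for all $n\ge2$, $r\ge1$ (using $k\le n+r$, with the single borderline case $(n,r)=(2,1)$ checked by hand) forces $d_\ell(v_i)>d_\ell(p_{i,t})$. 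Thus every pendant edge is automatically distinguished, and the whole task collapses to realizing $n$ pairwise distinct values $a_i$.

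The final step is a counting argument, which I also expect to be the main obstacle precisely because it is exactly tight. For a prescribed target $a$, one can pick $\ell(v_i)\in[k]$ and then distribute the pendant labels so that $P_i=a+\ell(v_i)$ if and only if $a\in[\,r-k,\ rk-1\,]$, using that the sum of $r$ labels from $[k]$ realizes every integer in $[r,rk]$. This admissible interval contains $k(r+1)-r$ integers, and the inequality $k(r+1)-r\ge n$ is equivalent to $k\ge(n+r)/(r+1)$, i.e.\ to the very definition of $k$. I would therefore choose $a_1,\dots,a_n$ to be $n$ distinct integers from this interval and realize each $a_i$ by an explicit choice of $\ell(v_i)$ together with pendant labels summing to $a_i+\ell(v_i)$; these choices are independent across $i$ since distinct clique vertices have disjoint pendant sets and the common term $S$ cancels in all clique-edge comparisons. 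The resulting $\ell$ is a $d$-lucky labeling into $[k]$, giving $\eta_{dl}(G)\le k$ and hence equality. Since the supply $k(r+1)-r$ of admissible values of $a_i$ meets $n$ exactly at the threshold defining $k$, this is where the sharpness against Theorem~\ref{thm:lower} becomes transparent.
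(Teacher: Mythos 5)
Your proposal is correct, and both halves line up with the paper's strategy at the top level: the lower bound is obtained exactly as in the paper (instantiating Theorem~\ref{thm:lower}, equivalently Corollary~\ref{cor:lower-regular}, at the clique $V(K_n)$ of vertices of degree $n-1+r$), and the upper bound rests on the same key quantity, namely that clique edges are distinguished precisely when the values $a_i = P_i - \ell(v_i)$ are pairwise distinct. Where you genuinely diverge is in how the upper-bound labeling is produced. The paper splits into the cases $n\le r+1$ (all clique vertices labeled $1$, pendant sums $r, r+1,\ldots$, two labels suffice) and $n>r+1$ (the first $kr-r+1$ clique vertices labeled $1$ with pendant sums sweeping $[r,kr]$, the rest labeled $2,3,\ldots$ with pendant sums $r$), and verifies in each case that the resulting $d$-sums are distinct consecutive integers. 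You instead prove a uniform existence statement: the set of realizable values of $a_i$ is the full integer interval $[\,r-k,\ rk-1\,]$ of size $k(r+1)-r$, and $k(r+1)-r\ge n$ is literally the definition of $k=\lceil (n+r)/(r+1)\rceil$, so $n$ distinct targets can be chosen and realized independently. This buys you two things: no case analysis, and a transparent explanation of why the bound is tight. It also makes the pendant-edge check explicit (via $d_\ell(v_i)\ge 2n+2r-2$ versus $d_\ell(p_{i,t})\le k+1$, with the borderline case $(n,r)=(2,1)$ checked separately), a point the paper passes over with the remark that pendant vertices form an independent set. What the paper's explicit construction buys in exchange is that its concrete labeling, with its consecutive $d$-sum values, is reused verbatim in the proof of Theorem~\ref{thm:web}; to serve that later application, your abstract argument would have to be instantiated with a specific choice of the targets $a_1,\ldots,a_n$ (the paper's choice is the top $n$ consecutive integers of your admissible interval).
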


\begin{proof} 
To shorten the notation, set $G_{n,r} = K_n\circ \overline{K_r}$. Clearly, $\omega(G_{n,r}) = n$ and every vertex $v$ in the largest clique $K_{n}$ of $G_{n,r}$ has $d_{_{G_{n,r}}}(v) = n - 1 + r$. Hence Corollary~\ref{cor:lower-regular} gives $\eta_{_{dl}}(K_n\circ \overline{K_r}) \geq  \left\lceil \frac{n+r}{r+1}\right\rceil$. 

To prove the reverse inequality, let $X = \{v_1, \ldots, v_n\}$ be the vertex set of the subgraph $K_n$ of $G_{n,r}$ and let $X_i$, $i\in [n]$, be the set of pendant vertices of $G_{n,r}$ adjacent to $v_i$.

Suppose first that $n\le r+1$. In this case label all the vertices of $X$ with $1$. Further, if $i\in [n]$, then label $(i-1)$ vertices from $X_i$ with $2$ and the other $r-i+1$ vertices of $X_i$ with label $1$, See Fig.~\ref{k5,4_and_k11,1}(a). As $n\le r+1$, the sums of the labels in $X_i$ and $X_j$ are different for all $i\ne j$. Hence, this labeling is a $d$-lucky labeling with two labels and so $\eta_{_{dl}}(K_n\circ \overline{K_r}) \le 2$. We observe that as $n \geq 2$, 
$$\frac{n+r}{r+1} \geq \frac{2+r}{r+1} > 1$$ 
and as $n \leq r + 1$, 
$$\frac{n+r}{r+1} \leq \frac{2r+1}{r+1} = 1 + \frac{r}{r + 1} < 2\,.$$ 
Therefore $\left\lceil \frac{n+r}{r+1}\right\rceil = 2$. Thus $\eta_{_{dl}}(K_n\circ \overline{K_r}) \leq \left\lceil \frac{n+r}{r+1}\right\rceil$.

\begin{figure}[ht!]
\centering
\includegraphics[scale=0.79]{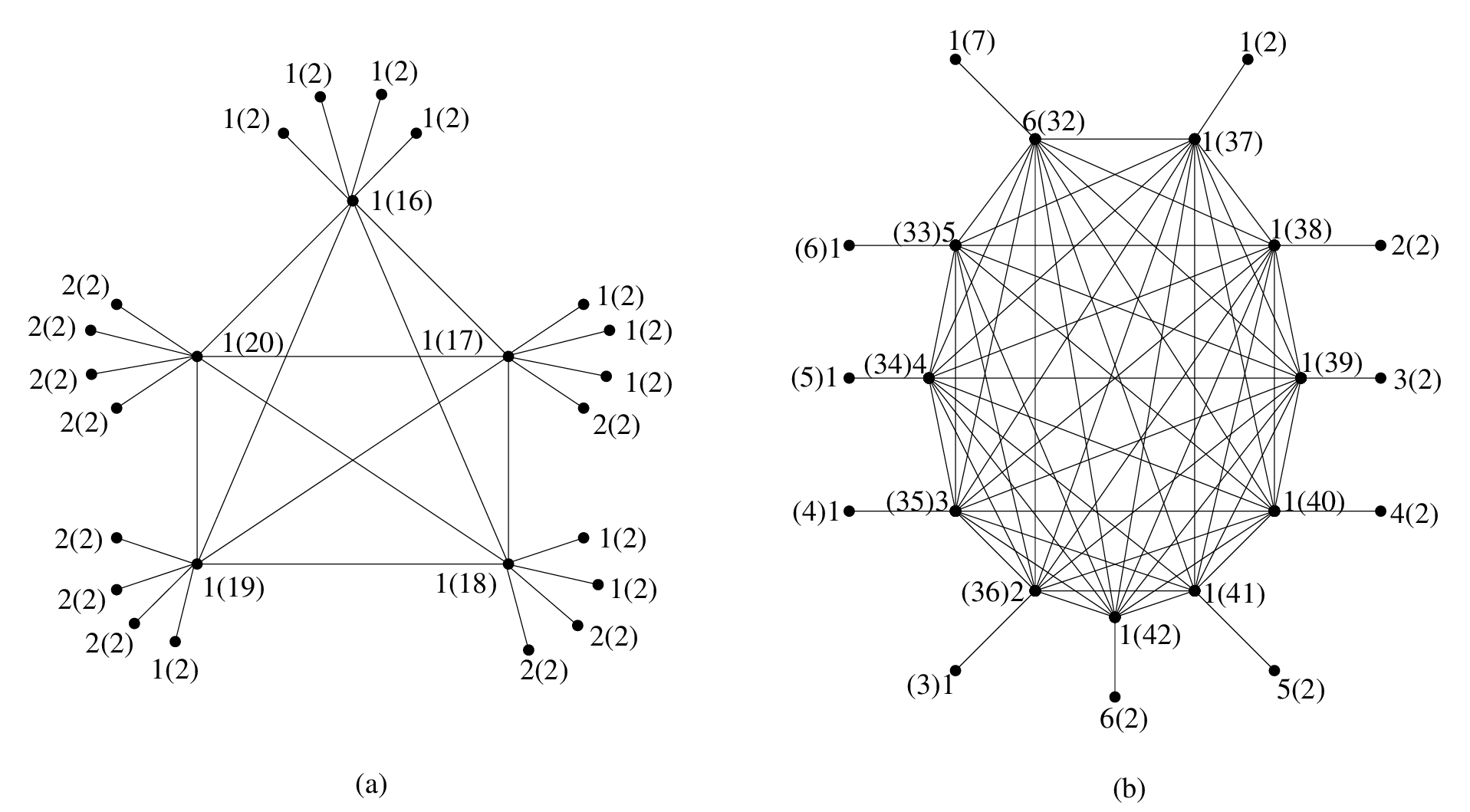} 
\caption{~(a)~$d$-lucky labeling of $G_{5,4}$\ (b) $d$-lucky labeling of $G_{11,1}$}
\label{k5,4_and_k11,1}
\end{figure}

Assume now that $n > r + 1$. In this case label the vertices $v_1, v_2, \ldots, v_{kr - r + 1}$ of $X$ with $1$ and the remaining $n - \left(kr - r + 1\right)$ vertices in $X$ with labels $2,3,\ldots, n - \left(kr - r + 1\right) + 1$, where $k = \left\lceil \frac{n+r}{r+1}\right\rceil$. Label vertices in $X_{_{i}}$, $1\leq i \leq kr - r + 1$ with labels such that the sums $x_{_{i}}$'s, $1\leq i \leq kr - r + 1$ are all distinct lying between $r$ and $k r$. Further, label all other pendant vertices as $1$. See Figure $\ref{k5,4_and_k11,1}(b)$. The contribution to the $d$-sums of the $k r - r + 1$ vertices of $X$ which are labeled $1$ are, respectively, 
$$x-1+r, x-1+\left(r+1\right), \ldots, x-1+ kr\,$$ 
and the contribution to the $d$-sums of the remaining $n - \left(kr - r + 1\right)$ vertices of $X$ are, respectively, 
$$x-2+r, x-3+r, \ldots, x-\left(n- kr+r\right)+r\,.$$ 
Since the degree of every vertex in $X$ is $t = n - 1 +r$, the $d$-sums of the vertices of $X$ are $t+x+r-\left(n- kr+r\right),~t+x+r-\left(n- kr+r\right)+1,~\ldots, t+x+r-3,~t+x+r-2,~t+x+r-1,~t+x+r,~t+x+r+1, \ldots,~t+x+r+\left(kr-r-1\right)$, which are consecutive integers between $t+x+ kr-n$ and $t+x+ kr-1$. Thus all vertices of $X$ receive distinct consecutive $d$-sums. Since the pendant vertices form an independent set, the labels of these vertices do not contribute to the $d$-lucky number. As $n > r+1$, the sums of the labels in $X_i$ and $X_j$ are different for all $i\ne j$. Therefore, 
\begin{eqnarray*}
\eta_{_{dl}}(K_n\circ \overline{K_r})  & \leq & n - (kr - r + 1) + 1 = (n + r) - \left\lceil \frac{n+r}{r+1}\right\rceil r \\
& \leq & (n + r) - \left(\frac{n+r}{r+1} \right)r \leq \frac{n+r}{r+1}  \leq  \left\lceil \frac{n+r}{r+1}\right\rceil\,. 
\end{eqnarray*}
We conclude that $\eta_{_{dl}}(K_n\circ \overline{K_r}) =  \left\lceil \frac{n+r}{r+1}\right\rceil$.
\end{proof}

Theorem~\ref{thm:corona} thus gives an infinite, non-trivial family of graphs for which the equality is achieved in Theorem~\ref{thm:lower}.  

\section{More exact $d$-lucky numbers}
\label{sec:web-graphs}

In this section we determine the $d$-lucky number of two infinite families of graphs. With the aid of Theorem~\ref{thm:corona} we obtain the $d$-lucky number of the $G_{_{n,m}}$-web graphs defined below. At the end of the section we then present the $d$-lucky number of graphs obtained by attaching the same number of pendant vertices to the vertices of a generalized cocktail-party graph.  

For $m,n\geq 3$, set $C_{m,n} = P_{_{m}}\cp C_{_{n}}$, where $\cp$ denotes the standard Cartesian product of graphs~\cite{imrich-2008}. The graphs $C_{m,n}$ are sometimes called {\em cylinders}. Let us call the edges of $C_{m,n}$ that project on $P_{_{m}}$ {\em radial edges} of $C_{m,n}$, and the other edges (that is, those that project on $C_n$) {\em cycle edges}. Further, the two $C_n$-layers whose vertices are of degree $3$ will be called the {\em top layer} and the {\em bottom layer}, respectively, while the other $C_n$-layers will be referred to as layer$1$, $\ldots$, layer $m - 1$, see Fig.~$\ref{k6,3web}(a)$. Now, the {\em $G_{m,n}$-web graph} is obtained from the disjoint union of $C_{m,n}$ and $K_n$ by adding a matching between the top layer of $C_{m,n}$ and the vertices of $K_n$, subdividing each of these matching edges, and subdividing all the edges of the $C_m$-layers of $C_{m,n}$. See Fig.~$\ref{k6,3web}(b)$ and Fig.~$\ref{k6,3web}(c)$ for $G_{3,6}$ and $G_{4,6}$, respectively.  

\begin{figure}[ht!]
\centering
\includegraphics[scale=0.69]{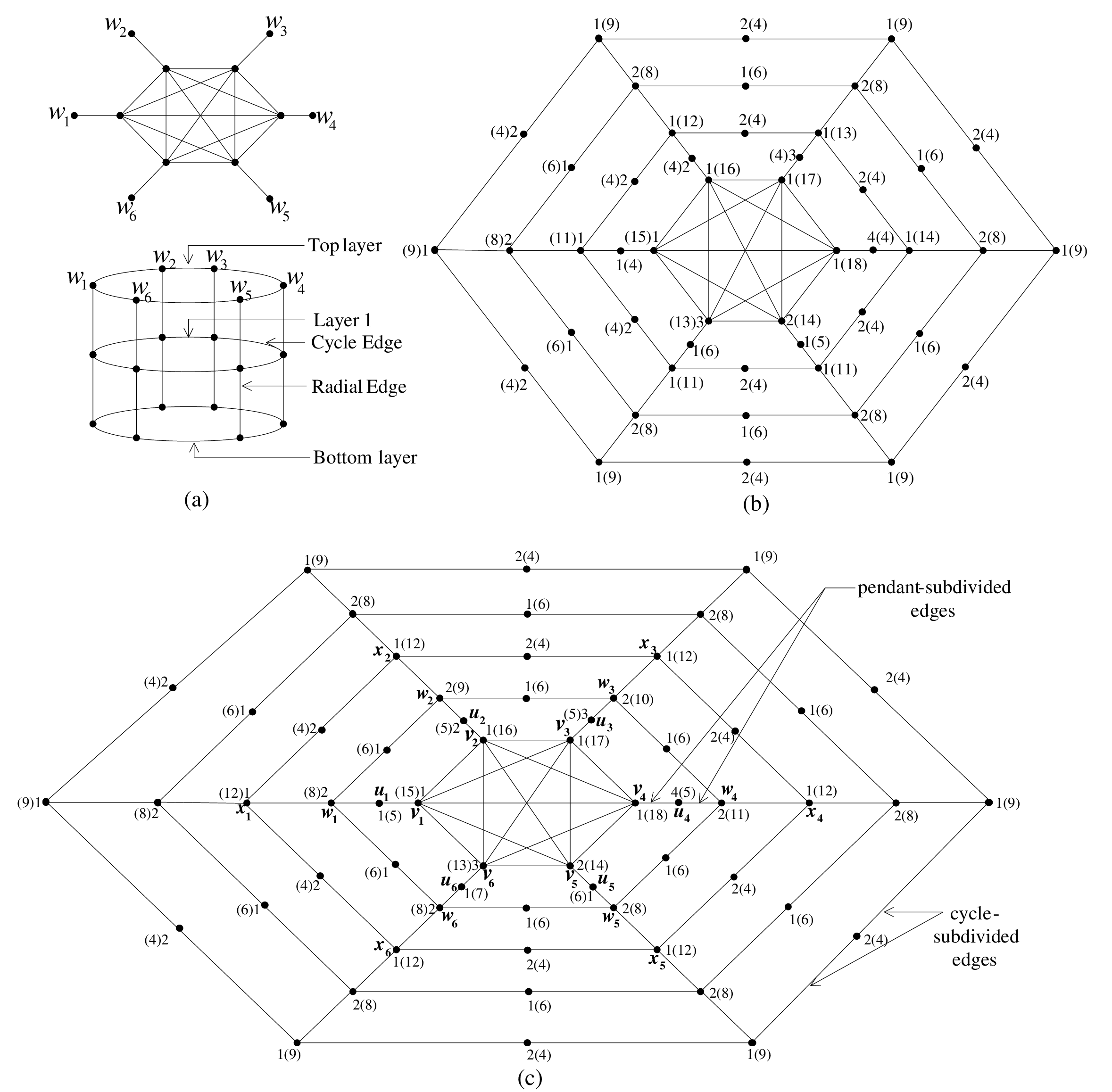} 
\caption{(a) $P_{3}\cp C_{6}$\ (b) $d$-lucky labeling of the $G_{_{3,6}}$-web graph\ (c) $d$-lucky labeling of the $G_{_{4,6}}$-web graph}
\label{k6,3web}
\end{figure}

The edges of $G_{m,n}$ obtained by subdividing the matching edges between $C_{m,n}$ and $K_n$ will be called {\em pendant-subdivided edges}, and the edges obtained by subdividing the $C_m$-layers {\em cycle-subdivided edges}, see Fig.~$\ref{k6,3web}(c)$ again. The result of this section now reads as follows. 

\begin{thm} 
\label{thm:web}
If $m\ge 3$ and $n\ge 5$, then  $\eta_{_{dl}}(G_{m.n}) = \left\lceil \frac{n+1}{2}\right\rceil$. 
\end{thm}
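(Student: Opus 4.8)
The plan is to meet the lower bound coming from Corollary~\ref{cor:lower-regular} with an explicit labeling, using Theorem~\ref{thm:corona} (in the case $r=1$) to dispose of the only genuinely constrained part of $G_{m,n}$, namely its central clique. First I would pin down the relevant structure. Since $n\ge 5$, the cylinder $P_m\cp C_n$ is triangle-free and remains so after all subdivisions, so the largest clique of the web graph is the core $K_n$ and $\omega(G_{m,n})=n$. Moreover, each core vertex $v_i$ is adjacent to the $n-1$ other core vertices and to exactly one subdivision vertex $s_i$ on its pendant-subdivided edge, so every core vertex has degree $n$ in $G_{m,n}$. Corollary~\ref{cor:lower-regular} applied with $r=\omega=n$ then gives $\eta_{dl}(G_{m,n})\ge \left\lceil \frac{n+1}{2}\right\rceil$, the target value, which I will call $k$.

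For the reverse inequality I would label the core and its adjacent subdivision vertices exactly as in the $r=1$ instance of Theorem~\ref{thm:corona}. The key observation is that the $d$-lucky sum of a core vertex has the same form in $G_{m,n}$ as in $K_n\circ \overline{K_1}$, namely $d_\ell(v_i)=n+\big(\sum_{j}\ell(v_j)\big)-\ell(v_i)+\ell(s_i)$, so it is governed solely by the difference $\ell(s_i)-\ell(v_i)$. Because $2k-1\ge n$, these $n$ differences can be made pairwise distinct using labels from $[k]$, which renders all core $d$-sums distinct and settles every clique edge simultaneously. This is exactly the point at which Theorem~\ref{thm:corona} enters.

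It then remains to extend the labeling to the top-, internal-, and bottom-layer vertices together with the cycle- and radial-subdivision vertices, without creating any conflict. Here I would exploit that, away from the core, $G_{m,n}$ is sparse: the subdivision vertices have degree $2$, while the layer vertices have degree $3$ or $4$, so the additive degree term already separates adjacent vertices of different degree, and the work reduces to separating equal-degree neighbors by forcing their neighbor-label sums to differ. I would fix most subdivision vertices to label $1$, propagate a near-alternating pattern of labels around each $C_n$-layer and along the radial fibers, and then verify the $d$-lucky condition edge type by edge type: the two halves $v_is_i$ and $s_it_i$ of each pendant-subdivided edge, the cycle edges and cycle-subdivided edges within each layer, and the radial and radial-subdivision edges between consecutive layers.

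The main obstacle is the within-layer cyclic constraint: consecutive vertices around each $C_n$-layer must receive distinct $d$-sums, yet only about $n/2$ labels are available and each such $d$-sum also depends on the labels of the radial neighbors in the adjacent layers. Making a near-alternating pattern close up consistently around the cycle is precisely a parity issue in $n$, which is what the ceiling in the bound records, and coordinating these per-layer patterns with one another and with the labels already forced at the core interface is the delicate bookkeeping. The hypotheses $n\ge 5$ and $m\ge 3$ are what guarantee at least three available labels and enough layers to rule out small exceptional cases, so that the patterns can always be completed; confirming that no two adjacent vertices anywhere share a $d$-sum is then a finite, if somewhat tedious, case analysis.
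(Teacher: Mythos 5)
Your lower bound is exactly the paper's: $K_n$ is the unique largest clique of $G_{m,n}$, its vertices have degree $n$, and Corollary~\ref{cor:lower-regular} gives $\eta_{dl}(G_{m,n})\ge\left\lceil \frac{n+1}{2}\right\rceil$. Your idea for the core is also the paper's: restrict the labeling of Theorem~\ref{thm:corona} with $r=1$ to the subgraph induced by the clique vertices $v_i$ and the pendant-subdivision vertices $u_i$ (your $s_i$), and your observation that the clique $d$-sums are governed by the differences $\ell(u_i)-\ell(v_i)$, of which $2k-1\ge n$ are available, is a clean justification of why that step works.

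The gap is in the extension, and it stems from a misreading of $G_{m,n}$. In the web graph every cycle edge (edge of a $C_n$-layer) is subdivided, while the radial edges are not; consequently two consecutive vertices of a $C_n$-layer are never adjacent in $G_{m,n}$, and the ``within-layer cyclic constraint'' that you name as the main obstacle does not exist. Indeed $G_{m,n}-K_n$ is bipartite (each layer becomes an even $2n$-cycle), so a strict alternation of labels $1$ and $2$ exists for every $n$; there is no parity issue in $n$, and the ceiling in $\left\lceil \frac{n+1}{2}\right\rceil$ records the clique bound, not any closing-up obstruction. Meanwhile the genuine difficulty is one your plan never touches: the edges $u_iw_i$ joining the pendant-subdivision vertices to the top layer. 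The labels $\ell(u_i)\in[k]$ are forced by the corona construction, and the resulting sums $d_\ell(u_i)$ can collide with $d_\ell(w_i)$; for instance, with the natural extension (all $w_i$ labeled alike, alternation below), one gets $d_\ell(u_i)=i-k+4$ and $d_\ell(w_i)=11$ when $m$ is even, which collide precisely at $i=k+7$, possible whenever $n\ge 15$. This is why the paper must repair the labeling with ad hoc relabelings (setting a few $w$-labels to $3$, at positions depending on the parity of $m$), and then verify four types of edges. Since the naive extension you describe actually fails for large $n$, deferring to ``a finite, if somewhat tedious, case analysis'' leaves out the part of the argument where the construction must be exhibited and corrected; as it stands, your proposal proves the lower bound and handles the clique edges, but not the upper bound.
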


\begin{proof}
To simplify the notation, set $G = G_{m,n}$ for the rest of the proof. Note first that $K_n$ is the unique largest clique of $G$, its vertices being of degree $n$ in $G$, hence by Corollary~\ref{cor:lower-regular}, $\eta_{_{dl}}(G) \ge \left\lceil \frac{n+1}{2}\right\rceil$. It thus remains to prove that $G$ admits a $d$-lucky labeling using $\left\lceil \frac{n+1}{2}\right\rceil$ labels. 

Let $V(K_n) = \{v_{_{1}}, \ldots, v_{_{n}}\}$, let $u_{_{1}}, \ldots, u_{_{n}}$ be the respective adjacent vertices of $G$ obtained by subdividing the matching edges between $K_n$ and $C_{m,n}$, and let $w_{_{1}}, \ldots, w_{_{n}}$ be the corresponding vertices of the top layer of $C_n$. Let $H$ be the subgraph of $G$ induced by the vertices $u_{_{1}}, \ldots, u_{_{n}}, v_{_{1}}, \ldots, v_{_{n}}$. 

Set $k = \left\lceil \frac{n+1}{2}\right\rceil$ and construct a labeling $\ell:V(G)\rightarrow [k]$ as follows. Let $\ell$ restricted to $H$ be the labeling of $K_n\circ K_1 = H$  with $\eta_{_{dl}}(H) = k$ labels from the proof of Theorem~\ref{thm:corona}. This labeling yields consecutive numbers $\frac{n^{2}+10n}{8} + j$, $j\in [n]$, as $d$-sums of vertices of $K_{n}$, when $n$ is even, and consecutive numbers $\frac{n^{2}+12n-5}{8} + j$, $j\in [n]$, as $d$-sums of vertices of $K_{n}$ when $n$ is odd. Next, for every $i\in [n]$ set $\ell(w_{i}) = 1$ when $m$ is odd,  and $\ell(w_{i}) = 2$ when $m$ is even. Next, label the remaining unlabeled vertices with labels $1$ and $2$ such that each vertex labeled 1 has all neighbors labeled $2$ and vice versa, every edge adjacent to vertex labeled $1$ as $2$ and vice versa. (Note that this is possible as $G-K_n$ is bipartite.) Finally, if $m$ is even and $n \geq 15$, redefine $\ell(w_{_{k+7}}) = 3$, and if $m$ is odd and $n\geq 8$, redefine $\ell(w_{_{5}}) = 3$ and $l(w_{_{k+3}}) = 3$. 

For every edge $e=uv$ of $G$, we have to prove that $d_\ell(u)\neq d_\ell(v)$ and consider typical edges. 

\medskip\noindent
{\bf Case 1}: $e = u_iv_i$, $i\in [n]$. \\
Since the degree of each vertex of $K_{n}$ in $G$ is $n$ and it is possible to label each vertex of $K_{n}$ as $1$, the $d$-sum of a vertex $v_i\in V(K_{_{n}})$, $i\in [n]$, is at least $(n-1)+1+n = 2n$ if $m$ is even and at least $(n-1)+2+n = 2n+1$ if $m$ is odd. In other words, for all $n$, the minimum $d$-sum of a vertex in $K_{_{n}}$ is $2n$. On the other hand, for all $n$, the maximum $d$-sum of a vertex $u_{_{i}}$, $i\in [n]$, is $k+5 = \left\lceil \frac{n+1}{2}\right\rceil + 5$. By a straightforward induction we can see that $\left\lceil \frac{n+1}{2}\right\rceil + 5 < 2n$ holds for $n\geq 5$. Hence the $d$-sums of the adjacent vertices $v_{_{i}}$ and $u_{_{i}}$ are distinct.

\medskip\noindent
{\bf Case 2}: $e = u_iw_i$, $i\in [n]$. \\
By our labeling, $k$ vertices, say $v_{_{1}},\ldots, v_{_{k}}$ are labeled $1$ and the corresponding adjacent vertices $u_{_{1}}, \ldots, u_{_{k}}$ are labeled $1,\ldots, k$. Further, $\ell(u_{_{i}}) = i$, $i\in [k]$, and $\ell\left(u_{_{k+i}}\right) = 1$, $i\in [n-k]$. 

Suppose first that $m$ is even. For $5\leq n \leq 14$, $\ell(w_{_{i}}) = 1$, $i\in [n]$, and for $n\geq 15$, 
$$\ell(w_{_{i}})=\left\{
\begin{array}{ll}
1; & i\in [n]\ {\rm and}\ i\neq k+7,\\
3, & i = k + 7.
\end{array}\right.$$
Hence when $5\leq n \leq 14$ we have 
$$d_\ell(u_{_{i}})=\left\{
\begin{array}{ll}
4; & i\in [k],\\
i - k + 4; & k+1\leq i\leq n,
\end{array}\right.$$
and 
$$d_\ell(w_{_{i}})=\left\{
\begin{array}{ll}
i+10; & i\in [k],\\
11; & k+1 \leq i\leq n.
\end{array}\right.$$
If $n\geq 15$, then we have 
$$~~~~~~~~d_\ell(u_{_{i}})=\left\{
\begin{array}{ll}
4; & i\in [k],\\
i-k+4; & k+1 \leq i\leq n\ {\rm and}\ i\neq k+7,\\
13; & i=k+7,\\
 \end{array}\right.$$
and 
$$d_\ell(w_{_{i}})=\left\{
\begin{array}{ll}
i+10; & i\in [k],\\
11, & k+1\leq i\leq n.\\
\end{array}\right.$$
Suppose now that $d_\ell(u_{_{i}}) = d_\ell(w_{_{i}})$. Since $n\ge 5$ this implies that $i - k + 4 = 11$, that is, $i = k + 7$, where $k + 1 \leq i \leq n$. Now $k + 7 \leq n$, that is, $\left\lceil \frac{n+1}{2}\right\rceil + 7\leq n$, yields $n\geq 15$. But then $d_\ell(u_{_{i}}) = 13 \neq 11 = d_\ell(w_{_{i}})$ for $i = k + 7$. This completes the argument when $m$ is even. 

Suppose next that $m$ is odd. For $5\leq n \leq 14$, $\ell(w_{_{i}}) = 2$, $i\in [n]$, and for $n\geq 15$, 
$$\ell(w_{_{i}})=\left\{
\begin{array}{ll}
2; & i\in [n]\ {\rm and}\ i\neq 5, k+3,\\
3; & i=5\ {\rm or}\ i=k+3.
\end{array}\right.$$
Therefore, if $5\leq n\leq 7$ we have
$$d_\ell(u_{_{i}})=\left\{
\begin{array}{ll}
5; & i\in [k],\\
i - k + 5; & k + 1\leq i\leq n,
\end{array}\right.$$
and 
$$d_\ell(w_{_{i}})=\left\{
\begin{array}{ll}
i + 7; & i\in [k],\\
8, & k + 1 \leq i \leq n.
\end{array}\right.$$
And if $n\geq 8$, then we have 
$$~~~~~~~d_\ell(u_{_{i}})=\left\{
\begin{array}{ll}
5; & i\in [4]\ {\rm or}\ 6 \leq i \leq k,\\
6; & i =5,\\
i -k + 5; & k + 1\leq i\leq n\ {\rm and}\ i \neq k+3,\\
9; & i=k+3,
\end{array}\right.$$
and 
$$d_\ell(w_{_{i}})=\left\{
\begin{array}{ll}
i + 7; & i\in [k], \\
8; & k + 1\leq i \leq n.
 \end{array}\right.$$
As in the case when $m$ was even, we can prove that for if $n\geq 5$, then $d_\ell(u_{_{i}}) \neq d_\ell(w_{_{i}})$ for $i\in [n]$. 

\medskip\noindent
{\bf Case 3}: $e$ is a radial edge. \\
We begin with the radial edges $w_{_{i}}x_{_{i}}$, $i\in [n]$, where $x_{_{i}}$ are vertices of layer $1$ of the subgraph $C_{_{m,n}}$ of $G$.

Suppose first that $m$ is even. Then $d_\ell(x_{_{i}}) = 8$ and $d_\ell(w_{_{i}})\geq 11$ for $i\in [n]$. It follows that the end vertices of the radial edges $w_{_{i}}x_{_{i}}$ receive distinct $d$-sums.

Assume next that $m$ is odd. Then
$$d_\ell(x_{_{i}})=\left\{
\begin{array}{ll}
12; & i\in [4]\ {\rm or}\ 6 \leq i \leq n\ {\rm or}\ i\leq k+3,\\
13, & i=5\ {\rm or}\ i=k+3.
\end{array}\right.$$
If $8 \leq d_\ell(w_{_{i}})\leq k+7$, then 
$$d_\ell(w_{_{i}})=\left\{
\begin{array}{ll}
7+i; & i\in [k],\\
8, & k+1\leq i \leq n.
 \end{array}\right.$$
If $i = 5$, then $d_\ell(w_{_{i}}) = 12 \neq 13 = d_\ell(x_{_{i}})$, and if $i \neq 5$, then $d_\ell(w_{_{i}}) \neq d_\ell(x_{_{i}})$. Thus the end vertices of the edges $e=w_{_{i}}x_{_{i}}$, $i \in \left[n\right]$ receive distinct $d$-sums.

 The end vertices of radial edges which are in layers $1, \ldots, m - 2$ receive $d$-sums $8$ and $12$, respectively. The radial edges with one end-vertex in layer $m - 1$ and the other end-vertex in the bottom layer, receive $d$-sums $8$ and $9$, respectively.

\medskip\noindent
{\bf Case 4}: $e$ is a cycle-subdivided edge. \\
If $v$ is a vertex subdividing a cycle edge, then, for all $m,n$ we have $\max d_\ell(v) = 6$, whereas $\min_{i\in [n]} d_\ell(w_{_{i}}) = 8$.
Hence the $d$-sums of end vertices of cycle-subdivided edges are also distinct.
\end{proof}

A {\em generalized cocktail-party graph} $H_{n,t}$, $n, t\ge 1$, is the complete $t$-partite graph with each partite set of size $n$, cf.~\cite{cvet1981}. If $n,t, r\ge 1$, set $H_{n,t,r} = H_{n,t}\circ \overline{K_{r}}$. That is, $H_{n,t,r}$ is obtained from $H_{n,t}$ by attaching $r$ pendant vertices at each of its vertices. 

\begin{thm}
\label{thm:cocktail}
If $n,r\ge 1$ and $t\ge 2$, then $\eta_{_{dl}}(H_{n,t,r}) = \left\lceil \frac{t + n + r - 1}{n + r}\right\rceil$.
\end{thm}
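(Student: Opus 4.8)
The plan is to mirror the two-sided strategy that worked for Theorem~\ref{thm:corona}, since $H_{n,t,r}$ is again a corona $G\circ\overline{K_r}$, now with $G=H_{n,t}$ the complete $t$-partite graph on parts of size $n$. First I would establish the lower bound. The largest clique of $H_{n,t}$ is obtained by picking one vertex from each of the $t$ parts, so $\omega(H_{n,t,r}) = t$, and every vertex of such a clique has the same degree in $H_{n,t,r}$, namely $r + (t-1)n$ (it is adjacent to all $n$ vertices in each of the other $t-1$ parts and to its $r$ pendants). Plugging $\omega = t$ and the regular clique degree $\rho := r+(t-1)n$ into Corollary~\ref{cor:lower-regular} gives
$$\eta_{_{dl}}(H_{n,t,r}) \ge \left\lceil \frac{\rho+1}{\rho - t + 2}\right\rceil = \left\lceil \frac{r+(t-1)n+1}{r+(t-1)n - t + 2}\right\rceil\,.$$
The main bookkeeping task here is to verify algebraically that this simplifies (or at least is equal after taking the ceiling) to $\left\lceil \frac{t+n+r-1}{n+r}\right\rceil$; I expect this to follow by clearing denominators and comparing, using $t\ge 2$.

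Second I would build an explicit $d$-lucky labeling attaining the bound. The natural idea is to exploit that the non-pendant vertices split into $t$ independent parts $A_1,\dots,A_t$, each of size $n$, and that within a part the neighborhoods (into the other parts) are identical. As in the corona proof, the pendant vertices form an independent set and contribute to the $d$-sum of their (unique clique) neighbor but have no adjacent vertices among themselves to distinguish, so I would label most pendants with $1$ and use a few larger pendant labels to fine-tune the $d$-sums of the core vertices. The core vertices must then receive $d$-sums that differ across every edge, i.e.\ between any two vertices lying in different parts. Within one part the vertices are mutually non-adjacent, so they may share $d$-sums; the real constraint is that the multiset of $d$-sums realized in part $A_i$ must be disjoint from that in $A_j$ for $i\ne j$, or at least that no cross-part pair coincides. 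I would aim to make the $d$-sums of the core vertices occupy $t$ consecutive ``blocks,'' one per part, by adjusting core labels and pendant-label sums, exactly paralleling the consecutive-$d$-sum construction in Theorem~\ref{thm:corona}.

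The hard part will be controlling the interaction between a core vertex's own label, the labels of its $(t-1)n$ core neighbors, and the sum of its $r$ pendant labels, all while keeping every label within $[k]$ where $k = \left\lceil \frac{t+n+r-1}{n+r}\right\rceil$. The degree term $d_G(u)=\rho$ is constant on the core, so it shifts all core $d$-sums uniformly and can be ignored when checking distinctness; what matters is that $\sum_{v\in N(u)}\ell(v)$ takes distinct values across parts. I would first fix a ``baseline'' labeling in which every core and pendant vertex gets label $1$, compute the resulting uniform core $d$-sum, and then show that by promoting pendant labels (and if necessary a controlled number of core labels in one designated part) I can spread the core $d$-sums into $t$ disjoint consecutive ranges; the count of distinct labels needed is then bounded above by the length of the longest such range divided by the per-part capacity $n+r$, which is precisely what produces the ceiling $\left\lceil \frac{t+n+r-1}{n+r}\right\rceil$.

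Finally I would separately confirm the pendant edges cause no collision: a pendant vertex has degree $1$ and its only neighbor is its core vertex, so $d_\ell(\text{pendant}) = 1 + \ell(\text{core})$, and I must check this never equals the (much larger, on the order of $\rho + \text{core-neighbor-sum}$) $d$-sum of that core vertex; for $\rho$ large relative to $k$ this is immediate, and the small cases $t=2$ or $r$ small would be handled by direct inspection much as the $n\le r+1$ versus $n>r+1$ split was handled in Theorem~\ref{thm:corona}. I expect the edge-by-edge verification (core–core edges, core–pendant edges) to be routine once the consecutive-block structure of the core $d$-sums is in place, so the genuine obstacle is the combinatorial design of the labeling that realizes those blocks within the label budget $[k]$.
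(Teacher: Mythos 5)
Your lower bound step fails, and it is the crux of the theorem. Corollary~\ref{cor:lower-regular} applied to a transversal clique of $H_{n,t,r}$ (with $\omega = t$ and common degree $(t-1)n+r$) gives
$$\eta_{_{dl}}(H_{n,t,r}) \ge \left\lceil \frac{(t-1)n+r+1}{(t-1)n+r-t+2}\right\rceil,$$
and this is \emph{not} equal to $\left\lceil \frac{t+n+r-1}{n+r}\right\rceil$ in general; it is strictly weaker. Concretely, for $n=2$, $t=10$, $r=1$ the corollary gives $\left\lceil 20/11\right\rceil = 2$, while the theorem asserts the value $\left\lceil 12/3\right\rceil = 4$. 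Worse, as $t\to\infty$ with $n\ge 2$ and $r$ fixed, the corollary's bound stays bounded (its denominator $(t-1)(n-1)+r+1$ grows proportionally to its numerator, so the ratio tends to $n/(n-1)$), whereas the claimed value grows linearly in $t$. So no amount of algebraic manipulation will close this step: the clique-based bound of Theorem~\ref{thm:lower} is too weak here because it lets the labels of the clique vertices' outside neighbors vary independently, inflating the range of conceivable $d$-sums far beyond what the structure of $H_{n,t}$ permits.

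What the paper does instead, and what your proof needs, is a direct counting argument exploiting that all $n$ vertices of a part have the \emph{same} core neighborhood: if $x$ is the total label sum over $V(H_{n,t})$, then the $d$-sum of a core vertex $u$ in part $V_i$ equals $\bigl[(t-1)n + r + x\bigr] - \bigl(\text{label sum of } V_i\bigr) + \bigl(\text{sum of $u$'s pendant labels}\bigr)$. The own-part sum lies in $[n, nk]$ and the pendant sum in $[r, rk]$, so \emph{all} core $d$-sums lie in an interval containing only $k(n+r)-(n+r)+1$ integers; choosing one representative from each part gives $t$ pairwise adjacent vertices that need distinct $d$-sums, forcing $t \le k(n+r)-(n+r)+1$, i.e.\ $k \ge \left\lceil \frac{t+n+r-1}{n+r}\right\rceil$. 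Your upper-bound sketch is in the right spirit and matches the paper's construction in outline (uniform label per part, pendant labels tuned so the parts' $d$-sums form disjoint consecutive blocks, with capacity $n+r$ new parts per additional label value), though it remains a sketch; the genuinely missing idea in your proposal is the structural lower-bound argument above.
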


\begin{proof}
To simplify the notation, set $G = H_{n,t,r}$ for the rest of the proof. 

Let  $\ell: V(G) \rightarrow [k]$ be an optimal $d$-lucky labeling of $G$ and set 
$$x = \sum_{u\in V(H_{n,t})} \ell(u)\,.$$
The minimum $d$-sum and the maximum $d$-sum of a vertex of $G$ from $H_{n,t}$ are $r + r + \left(nt - n\right)+\left(x - nk\right)$ and $rk + r + \left(nt - n\right) + \left(x - n\right)$ respectively. Therefore the number of distinct $d$-sums of the vertices from from $H_{n,t}$ is at most 
$$\left[\left(rk + r\right) + \left(nt - n\right) + \left(x - n\right)\right] - \left[r + r + \left(nt - n\right)+\left(x - nk\right)\right] + 1\,,$$
which simplifies to $k(n + r) - (n + r) + 1$. Since the $d$-sums of vertices from $H_{n,t}$ that belong to different partite sets are different, this implies that $t \leq k(n + r) - (n + r) + 1$. Thus $\eta_{_{dl}}(G) = k \geq \left\lceil \frac{t + n + r - 1}{n + r}\right\rceil$.

For the other inequality, set $k = \left\lceil \frac{t + n + r - 1}{n + r}\right\rceil$, let $p = \left\lfloor \frac{t - (kr - r + 1)}{n}\right\rfloor$, and let $q = (t - (kr - r + 1)) \bmod n$. Let $V_{_{1}}, \ldots, V_t$ be the $t$-partite sets of $H_{_{n,t}}$. Let ${\cal W} = \{V_{_{1}}, \ldots, V_{_{kr - r + 1}}\}$,  let ${\cal U}_j = \{V_{_{kr - r + 1 + (j - 1)n + 1}}, \ldots, V_{_{kr - r + 1 + (j - 1)n + n}}\}$, $j\in [p]$, and let ${\cal U}_{p+1} = \{V_{_{kr - r + 1 + pn + 1}},\ldots, V_{_{t}}\}$. Note that $|{\cal W}| = kr - r + 1$, $|{\cal U}_j| = n$, and $|{\cal U}_{p + 1}| = q$. In particular, if $n$ divides $t -(kr -r + 1)$, then ${\cal U}_{p + 1} = \emptyset$. 

We now define $\ell: V(G) \rightarrow [k]$ as follows. 
\begin{itemize}
\item The vertices in all the parts of ${\cal W}$ are labeled $1$. 
\item For $j\in [p + 1]$ label all the vertices of the partite sets in ${\cal U}_j$ with $j+1$.  
\item Label every set of $r$ pendant vertices adjacent to each vertex of the partite set $V_i$ from ${\cal W}$ with equal label sequences $(\ell^{^{i}}_{_{1}},\ldots, \ell^{i}_r)$, $i\in  [kr - r + 1]$, such that $(\ell^{^{1}}_{_{1}},\ldots, \ell^{^{1}}_{_{r}}) = (1,\ldots, 1)$, and such that the Hamming distance between $(\ell^{^{i}}_{_{1}}, \ldots, \ell^{^{i}}_{_{r}})$ and $(\ell^{^{i+1}}_{_{1}}, \ldots, \ell^{^{i+1}}_{_{r}})$ is $1$ for $i \in [kr - r]$.
\item To label the rest of the pendant vertices in $G$, repeat the same procedure for each of the $n$ number of partite sets in ${\cal U}_j$, $j\in [p]$, as well as for the partite sets in ${\cal U}_{_{p+1}}$. See Figure \ref{h3,8,4}.
\end{itemize}

\begin{figure}[ht!]
\centering
\includegraphics[scale=0.70]{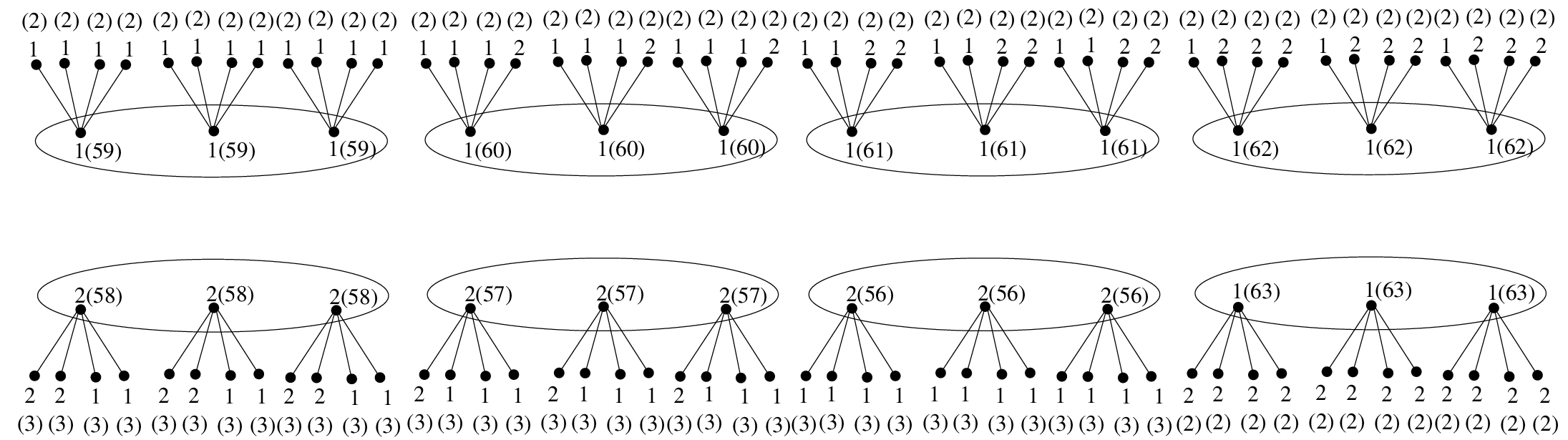} 
\caption{$d$-lucky labeling of $H_{3,8,4}$-generalized cocktail-party graph}
\label{h3,8,4}
\end{figure}

The $d$-sums of vertices in a partite set are all equal. Let $s_{_{i}}$ denote the $d$-sum of a representative vertex in $V_{_{i}}$, $i\in [t]$. Let $x$ be the sum of all labels of vertices in $H_{_{n,t}}$. Then $x - n + r, x - n + (r +1), \ldots,~x - n + kr$ are the $d$-sums of representative vertices in $V_{_{1}}, V_{_{2}}, \ldots, V_{_{kr-r+1}}$ respectively. Similarly, the $d$-sums of representative vertices in the $n$  partite sets in ${\cal U}_{_{j}}$ are $x - (j+1)n + r, x - (j+1)n + (r+1), \ldots, x - (j+1)n + (n + r -1) = x - jn + (r-1)$, $j \in [p]$. The same is true for ${\cal U}_{_{p+1}}$. This implies that $s_{_{t}}, s_{_{t-1}}, \ldots, s_{_{1}}$ are distinct consecutive integers. The $d$-sums of pendant vertices and vertices from the same partite set do not affect the $d$-lucky number. Further, the value of $k$ is optimum when $t$ is such that all vertices in $V_{_{t}}$ are labeled $k$ and $n$ divides $(t -(kr -r + 1))$. Then $t = kr - r+1 + \underbrace{n+\ldots+ n}_{(k-1)\ {\rm times}}$. We conclude that $\eta_{_{dl}}(G) \le k = \left\lceil \frac{t + n + r - 1}{n + r}\right\rceil$. 
\end{proof}

\section*{Acknowledgements}

Sandi Klav\v zar acknowledges the financial support from the Slovenian Research Agency (research core funding No.\ P1-0297 and project J1-9109).

\end{document}